\DeclareMathOperator{\reg}{reg}
\DeclareMathOperator{\bei}{J}
\DeclareMathOperator{\NumOfMaxCl}{c}
\DeclareMathOperator{\SizeOfMaxStabS}{\alpha}
\DeclareMathOperator{\NumOfMaxStabS}{s}
\DeclareMathOperator{\cone}{cone}
\DeclareMathOperator{\N}{\mathbb{N}}
\DeclareMathOperator{\Hilbert}{Hilb}
\newcommand{\compl}[1]{\overline{#1}}
\theoremstyle{definition}
\newtheorem{definition}{Definition}[section]
\newtheorem{example}[definition]{Example}
\newtheorem{remark}[definition]{Remark}
\theoremstyle{plain}
\newtheorem{lemma}[definition]{Lemma}
\newtheorem{proposition}[definition]{Proposition}
\newtheorem{theorem}[definition]{Theorem}
\newtheorem{corollary}[definition]{Corollary}
\newcommand{\kk}{\Bbbk}
\newcommand{\NN}{\mathbb{N}}
\renewcommand\>{\rangle}
\renewcommand{\subset}{\subseteq}
\newcommand{\chara}{\operatorname{char} }
\definecolor{darkblue}{RGB}{0,0,160}
\begin{document}

\title{Binomial edge ideals of cographs}

\author[T.~Kahle]{Thomas Kahle}
\address[T.~Kahle]{Fakultät für Mathematik, OvGU Magdeburg, Magdeburg, Germany}
\urladdr{\url{http://www.thomas-kahle.de}}

\author[J.~Krüsemann]{Jonas Krüsemann}
\address[J.~Krüsemann]{Rail Management Consultants GmbH, Hannover, Germany}
\email{jonas.kruesemann@t-online.de}

\keywords{binomial edge ideal, undirected graph, Castelnuovo--Mumford regularity, Betti numbers}
\subjclass[2010]{Primary 05E40; Secondary 13D02, 05C99, 13P20}


\thanks{Supported by the Deutsche Forschungsgemeinschaft (314838170, GRK 2297
MathCoRe).}

\begin{abstract}
  We determine the Castelnuovo--Mumford regularity of binomial edge ideals of
  complement reducible graphs (cographs).  For cographs with $n$ vertices the
  maximum regularity grows as~$2n/3$.  We also bound the regularity by graph
  theoretic invariants and construct a family of counterexamples to a conjecture
  of Hibi and Matsuda.
\end{abstract}

\maketitle

\section{Introduction}
Let $G = ([n],E)$ be a simple undirected graph on the vertex set
$[n]= \{1,\dots, n\}$.  Let $X = \left(
\begin{smallmatrix}
  x_{1}& \cdots & x_{n}\\
  y_{1}& \cdots & y_{n}
\end{smallmatrix}\right)
$ be a generic $2\times n$ matrix and $S = \kk[\begin{smallmatrix}
  x_{1}& \cdots & x_{n}\\
  y_{1}& \cdots & y_{n}
\end{smallmatrix}
]$ the polynomial ring whose indeterminates are the entries of~$X$ and with
coefficients in a field~$\kk$.  The \emph{binomial edge ideal of $G$} is
$\bei_{G} = \<x_{i}y_{j} - y_{i}x_{j} : \{i,j\}\in E\> \subset S$, the ideal of
$2\times 2$ minors indexed by the edges of the graph.  Since their inception in
\cite{herzogBEIorig,ohtani2011graphs}, connecting combinatorial properties of
$G$ with algebraic properties of $\bei_{G}$ or $S/\bei_{G}$ has been a popular
activity.  Particular attention has been paid to the minimal free resolution of
$S/\bei_{G}$ as a standard $\NN$-graded $S$-module \cite{EneZarojanu2013,
kiani2016castelnuovo}.  The data of a minimal free resolution is encoded in its
graded Betti numbers
$\beta_{i,j}(S/\bei_{G}) = \dim_\kk\text{Tor}_i(S/\bei_{G},\kk)_j$.  An interesting
invariant is the highest degree appearing in the resolution, the
Castelnuovo--Mumford regularity
$\reg (S/\bei_{G})=\max\{j-i:\beta_{ij} (S/\bei_{G})\neq 0\}$.  It is a
complexity measure as low regularity implies favorable properties like vanishing
of local cohomology.  Binomial edge ideals have square-free initial ideals by
\cite[Theorem~2.1]{herzogBEIorig} and, using~\cite{conca2018square}, this
implies that the extremal Betti numbers and regularity can also be derived from
those initial ideals.  In this paper we rely on recursive constructions of
graphs rather than Gröbner deformations.

At the time of writing it is unknown if the regularity of $S/\bei_{G}$ depends
on the characteristic $\chara(\kk)$ of the coefficient field.  Indication that it
is indeed independent comes, for example, from a purely combinatorial
description of the linear strand of the determinantal facet ideals
in~\cite{linstrand}.  Here we use only combinatorial constructions that are
independent of $\chara(\kk)$, based on small graphs for which the minimal free
resolutions are independent of~$\kk$.  Our starting point are the following bounds
due to Matsuda and Murai~\cite{matsuda2013regularity} which are valid
independent of~$\chara(\kk)$.
\begin{theorem}\label{theorem:matsudaMurai}
  Let $\ell$ be the maximum length of an induced path in a
  graph~$G$. Then
  \begin{equation*}
    \ell \leq \reg(S/\bei_{G}) \leq n-1.
  \end{equation*}
\end{theorem}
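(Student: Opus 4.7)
The proof splits naturally into the lower bound $\ell \le \reg(S/\bei_G)$ and the upper bound $\reg(S/\bei_G) \le n-1$, which I would treat by distinct methods.

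For the lower bound, the strategy combines monotonicity of regularity under induced subgraphs with an explicit computation for paths. The monotonicity claim is: for every $W \subseteq [n]$ with induced subgraph $H = G[W]$ one has $\reg(S_W/\bei_H) \le \reg(S/\bei_G)$, where $S_W = \kk[x_i,y_i : i\in W]$. I would prove this by passing to the squarefree initial ideal $\mathrm{in}(\bei_G)$, whose existence is guaranteed by \cite{herzogBEIorig} and which preserves extremal Betti numbers by \cite{conca2018square}. Then Hochster's formula, applied to the simplicial complex associated with $\mathrm{in}(\bei_G)$, reduces the monotonicity to the evident monotonicity of Betti numbers under induced subcomplexes, together with the combinatorial fact that the admissible-path description of $\mathrm{in}(\bei_{G[W]})$ is precisely the restriction of the complex for $\mathrm{in}(\bei_G)$ to the coordinates indexed by $W$. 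Applying the resulting monotonicity with $W$ the vertex set of a longest induced path leaves the verification $\reg(S/\bei_{P_{\ell+1}}) = \ell$, which follows from the known explicit Cohen--Macaulay minimal free resolution of the binomial edge ideal of a path, exhibiting in particular a non-vanishing Betti number $\beta_{\ell, 2\ell}(S/\bei_{P_{\ell+1}})$.

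For the upper bound, I would use the containment $\bei_G \subseteq \bei_{K_n} = I_2(X)$ and the short exact sequence
$$0 \to \bei_{K_n}/\bei_G \to S/\bei_G \to S/\bei_{K_n} \to 0.$$
The Eagon--Northcott complex gives $\reg(S/\bei_{K_n}) = n-1$. The module $\bei_{K_n}/\bei_G$ is generated in degree $2$ by the binomials indexed by the non-edges of $G$; inductively adding one non-edge at a time and using the standard regularity estimate for short exact sequences reduces the bound on $\reg(\bei_{K_n}/\bei_G)$ to the complete-graph case and yields $\reg(\bei_{K_n}/\bei_G) \le n-1$. Subadditivity of regularity in the displayed sequence then closes the proof.

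The technical heart and main obstacle is the induced-subgraph monotonicity underlying the lower bound. A direct argument at the level of $\bei_G$ itself is delicate, because the linear forms $x_v,y_v$ for $v \notin W$ do not generally form a regular sequence on $S/\bei_G$, as even the star graph $K_{1,k}$ illustrates; one therefore cannot simply quotient them out and invoke regularity monotonicity under regular sequences. The route through the squarefree initial ideal circumvents this obstruction but relies crucially on the non-trivial identification of extremal Betti numbers across the Gröbner deformation provided by \cite{conca2018square}.
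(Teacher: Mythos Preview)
The paper does not supply its own proof of this statement; it is quoted from Matsuda and Murai~\cite{matsuda2013regularity} and used as input. So there is no in-paper argument to compare against, and your proposal has to stand on its own merits.

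Your lower-bound strategy is sound, if heavier than necessary. The chain
\[
\reg(S/\bei_G) \;=\; \reg\bigl(S/\mathrm{in}(\bei_G)\bigr) \;\ge\; \reg\bigl(S_W/\mathrm{in}(\bei_{G[W]})\bigr) \;\ge\; \reg(S_W/\bei_{G[W]}) \;=\; \ell
\]
does work, with the first equality supplied by~\cite{conca2018square} and the middle inequality by Hochster's formula for the restricted Stanley--Reisner complex. Note that Matsuda--Murai predates~\cite{conca2018square}; their argument does not need equality of regularity across the Gr\"obner degeneration, so your route is more recent machinery applied to an older result.

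Your upper-bound argument, however, contains a genuine error. The Eagon--Northcott resolution of $I_2(X)$ for a $2\times n$ matrix is \emph{linear}: its length is $n-1$, but every differential is of degree~$1$, so $\reg(S/\bei_{K_n}) = 1$, not $n-1$. You appear to have confused regularity with projective dimension. With the correct value, your short exact sequence gives only
\[
\reg(S/\bei_G) \;\le\; \max\bigl\{\reg(\bei_{K_n}/\bei_G),\,1\bigr\},
\]
so the entire content of the bound must come from $\reg(\bei_{K_n}/\bei_G)$. Your inductive sketch for this does not close: deleting one edge $e$ from $G'$ to reach $G$ yields a cyclic module $\bei_{G'}/\bei_G \cong \bigl(S/(\bei_G : f_e)\bigr)(-2)$, and bounding the regularity of that colon ideal is no easier than the original problem. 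The actual Matsuda--Murai proof of $\reg(S/\bei_G)\le n-1$ uses a different inductive scheme (on the number of vertices, via vertex deletion and the associated exact sequences), not the filtration by non-edges you propose.
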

Our aim is to investigate families of graphs for which the lower bound is
constant.  We study the family of graphs with no induced path of length~$3$.
These are the \emph{complement reducible graphs (cographs)}.  They have been
characterized in \cite[Theorem~2]{CorneilLerchsBurlingham1981} as graphs such
that for every connected induced subgraph with at least two vertices, the
complement of that subgraph is disconnected.  Cographs are hereditary in the
sense that every induced subgraph of a cograph is a cograph
\cite[Lemma~1]{CorneilLerchsBurlingham1981}.  The graphs $G$ for which
$\reg(S/\bei_{G}) \le 2$ are all cographs by the ($\chara(\kk)$-independent)
characterization in \cite[Theorem~3.2]{madani2018binomial}.

One can quickly find that among (connected) cographs arbitrarily high regularity
is possible (Proposition~\ref{prop:cographsHighReg}), but when the regularity is
considered as a function of the number of vertices of the cograph, there is a
stricter upper bound than that in Theorem~\ref{theorem:matsudaMurai}.  Our
Theorem~\ref{theorem:cographs:maxReg} shows that for cographs the regularity is
essentially bounded by~$2n/3$.  The experimental results and also the proof
methods leading to the $2n/3$ bound lead us to study regularity bounds in terms
of other graph invariants.  Theorem~\ref{thm:boundByInvariants} bounds
$\reg(S/\bei_{G})$ by the independence number $\SizeOfMaxStabS(G)$ and the
number of maximal independent sets $\NumOfMaxStabS(G)$.  This stands in the
context of a general upper bound of $\reg(S/\bei_{G})$ by the number of maximal
cliques~$\NumOfMaxCl(G)$, shown by Malayeri, Madani and
Kiani~\cite{malayeri2021proof}.  Interestingly, for cographs the mentioned
invariants are computable in linear time, while in general they are hard to
compute.

The investigations for this paper started with a large experiment in which we
tabulated properties of binomial edge ideals and the corresponding graphs for
many small graphs.  To this end we developed a database of algebraic properties
of~$S/\bei_{G}$ and the necessary tools to extend the database.  Our source code
is available at
\begin{center}
  \url{https://github.com/kruesemann/graph_ideals}.
\end{center}
In the course of this work we found a counterexample to a conjecture of Hibi and
Matsuda.  We present this in Section~\ref{s:HibiMatsuda}, which leaves the class
of cographs.

\subsection*{Notation} All graphs in this paper are simple and undirected,
meaning that they have only undirected edges with no loops and no multiple edges
between two fixed vertices.  We often consider binomial edge ideals $\bei_{G}$
and $\bei_{H}$ of graphs on different vertex sets $V(G)\neq V(H)$.  In this case
we can consider both ideals in a bigger ring with variables corresponding to
$V(G) \cup V(H)$.  Embedding the ideals in a ring with extra variables has no
effect on the invariants under consideration and we still write $\bei_{G}$
and~$\bei_{H}$ independent of the ambient ring.  By \emph{the regularity of $G$}
we mean $\reg (S/\bei_{G})$.  The \emph{complement} $\compl{G}$ of a graph $G$
is a graph on the same vertex set, but with the edge set
$E(\compl{G}) = \{(i,j) : i\neq j, (i,j)\notin E(G)\}$.  The \emph{path of
length $\ell-1$} has $\ell$ vertices and is denoted by~$P_{\ell}$.

\section{Regularity for cographs}

Our first aim is to show that $\reg(S/\bei_{G})$ can take arbitrarily large
values, even if $G$ is restricted to connected cographs and the lower bound of
Theorem~\ref{theorem:matsudaMurai} does not apply.  If one allows disconnected
graphs, this follows from the simple observation that regularity is additive
under the disjoint union $G \sqcup H$ of two graphs:
\[
  \reg(S/\bei_{G \sqcup H}) = \reg(S/\bei_{G}) + \reg(S/\bei_{H}).
\]
As noted in \cite[proof of Theorem~2.2]{EneZarojanu2013}, this follows from the
fact that $\bei_{G}$ and $\bei_{H}$ use disjoint sets of variables.  In
particular this is true independent of~$\kk$.
To see that arbitrary regularity is possible for connected cographs, we employ
the \emph{join} of two simple undirected graphs $G$ and $H$ which is
\begin{equation*}
  G \ast H = (V(G) \sqcup V(H), E(G) \cup E(H) \cup \{\{v,w\} : v \in V(G), w \in V(H)\}).
\end{equation*}
Here and in the following, $V(G)$ and $E(G)$ denote, respectively, the vertex
set and the edge set of an undirected simple graph~$G$.  The join also behaves
nicely with regularity as shown by Kiani and
Madani~\cite[Theorem~2.1]{madani2018binomial}.

\begin{theorem}\label{theorem:join:reg}
  Let $G$ and $H$ be simple, undirected graphs and not both complete. Then,
  independent of $\chara(\kk)$,
  \begin{equation*}
    \reg(S/\bei_{G \ast H}) = \max\{\reg(S/\bei_{G}), \reg(S/\bei_{H}), 2\}.
  \end{equation*}
\end{theorem}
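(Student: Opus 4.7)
The plan is to prove the two inequalities separately. The lower bound $\reg(S/\bei_{G\ast H}) \ge \max\{\reg(S/\bei_G), \reg(S/\bei_H), 2\}$ should be routine. Both $G$ and $H$ appear as induced subgraphs of $G\ast H$, and it is standard that regularity of binomial edge ideals is monotone under passage to induced subgraphs (this follows, e.g., from the behavior of the Lyubeznik–type primary decomposition of $\bei_G$, or from Gröbner deformation combined with the corresponding statement for simplicial complexes). This handles the first two terms. For the third, since $G$ and $H$ are not both complete, one of them, say $G$, has a non-edge $\{u,w\}$; pick any $z \in V(H)$ (we may assume $V(H) \neq \emptyset$, otherwise the statement is vacuous). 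Then $u\text{–}z\text{–}w$ is an induced $P_3$ in $G\ast H$ because $u,w$ are non-adjacent in $G\ast H$, so Theorem~\ref{theorem:matsudaMurai} delivers $\reg(S/\bei_{G \ast H}) \ge 2$.

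The real work is the upper bound. My plan would be to induct on $m = |V(H)|$, reducing the general case to a \textbf{universal-vertex lemma}: \emph{if $v$ is adjacent to every other vertex of a graph $F$ and $F - v$ is non-complete, then $\reg(S/\bei_F) \le \max\{\reg(S/\bei_{F-v}), 2\}$.} For $m = 1$ this lemma gives the theorem directly, since the unique vertex of $H$ is universal in $G \ast H$ and $\reg(S/\bei_H) = 0$.

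To prove the universal-vertex lemma I would use a short exact sequence of the form
\begin{equation*}
  0 \to S/(\bei_F : f)\, (-1) \to S/\bei_F \to S/(\bei_F + (f)) \to 0,
\end{equation*}
with $f$ chosen so that both flanking modules are tractable. A natural choice is to remove the variables $x_v, y_v$: the quotient $S/(\bei_F + (x_v, y_v))$ is the binomial edge ideal ring of $F - v$, contributing the term $\reg(S/\bei_{F-v})$, while the colon ideal $(\bei_F : (x_v, y_v))$ can be analysed using that $v$ is adjacent to every other vertex, so that after the colon the remaining generators assemble into an ideal built from complete bipartite/multipartite data whose regularity is bounded by $2$. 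Iterating this for each $v_i \in V(H)$ in a sequence of intermediate graphs between $G$ and $G \ast H$ then gives the full upper bound.

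The main obstacle is precisely the identification and regularity estimate of the colon ideal $(\bei_F : (x_v, y_v))$ when $v$ is universal; this is where the complete bipartite-like structure coming from the join must be used carefully. A secondary subtlety is that in the inductive step proper, a vertex $v \in V(H)$ is universal in $G \ast H$ only if $v$ was already universal in $H$, so one either has to iterate the lemma vertex by vertex or apply it once to the \emph{apex} viewpoint (treat all of $V(G)$ as attached to $H$, or vice versa) and combine with Theorem~\ref{theorem:matsudaMurai} to match the constant~$2$. I expect the technical heart of the argument to be verifying that the short exact sequence closes up to give an equality at the top, not merely an inequality, since the lower bound already forces a tight value.
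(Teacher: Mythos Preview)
The paper does not prove Theorem~\ref{theorem:join:reg}; it is quoted from Kiani and Saeedi Madani \cite[Theorem~2.1]{madani2018binomial} and used as a black box. So there is no ``paper's own proof'' to compare your proposal to.

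On the merits of your sketch: the lower-bound half is fine. Monotonicity of $\reg(S/\bei_{-})$ under induced subgraphs and the induced $P_{3}$ argument are standard and characteristic-free.

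The upper-bound half, however, is not yet a proof but a plan with the hard part explicitly left open. Two concrete issues:
\begin{itemize}
\item The short exact sequence you write down is the one for a \emph{single} degree-one element~$f$ (hence the shift $(-1)$), but you then say you want $S/(\bei_{F}+(x_{v},y_{v}))$ on the right, which requires modding out by two linear forms. Either you iterate the sequence twice (and must control $(\bei_{F}:x_{v})$ and then a further colon), or you use a different exact sequence; in either case the bookkeeping you need is not the one you wrote.
\item You correctly flag that a vertex $v\in V(H)$ is universal in $G\ast H$ only when it was already universal in~$H$, so the naive induction on~$|V(H)|$ via your universal-vertex lemma does not run. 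You would have to peel off vertices of $H$ one at a time from $G\ast H$, but after removing one such vertex the remaining graph is $G\ast(H\setminus v)$, in which the next vertex of $H$ is again not universal unless it was so in~$H$. Some reorganisation (e.g.\ inducting instead on the total number of vertices and always removing a vertex of the \emph{smaller} side, or proving the lemma in the stronger form ``adding a cone vertex does not increase regularity beyond $\max\{\reg,2\}$'' and iterating over all of $V(H)$ at once) is needed, and you have not supplied it.
\end{itemize}
The actual proof in \cite{madani2018binomial} does proceed via short exact sequences, but the specific sequences and the colon computations are the substance of the argument; what you have written identifies the right neighbourhood but does not yet land on a working route through it.
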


The join of two cographs is a cograph as the path $P_{4}$ of length 3
is not a join itself.  It follows that the regularity can be made
arbitrarily high by forming \emph{cones}, that is joins with single
vertex graphs.

\begin{proposition}\label{prop:cographsHighReg}
  For any $r\ge 1$ there is a connected cograph with $\reg(S/\bei_{G})=r$.  If
  $r$ is even, then there is such a graph with $\frac{3}{2}r + 1$ vertices.  If
  $r$ is odd, then there is such a graph with $\frac{3}{2}(r+3)$ vertices.
\end{proposition}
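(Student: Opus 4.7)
The plan is to construct $G$ explicitly using two small cographs of known regularity together with two regularity-respecting operations. The building blocks are $K_2$, whose binomial edge ideal is principal of degree $2$ so $\reg(S/\bei_{K_2}) = 1$, and $P_3$, for which Theorem~\ref{theorem:matsudaMurai} forces $\reg(S/\bei_{P_3}) = 2$ (an induced path of length $2$ gives the lower bound and $n-1 = 2$ gives the upper bound). Both are cographs. The operations are disjoint union, which is additive on regularity as noted in the text, and the cone $G_0 \mapsto G_0 \ast \{v\}$, for which Theorem~\ref{theorem:join:reg} gives $\reg(S/\bei_{G_0 \ast \{v\}}) = \max\{\reg(S/\bei_{G_0}), 0, 2\}$ whenever $G_0$ is not complete. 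Both operations preserve the cograph property: disjoint unions because $P_4$ is connected and therefore cannot be distributed across components, joins by complementation (the complement of a join is the disjoint union of complements, and cographs are closed under complementation).

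For $r = 1$ take $G = K_2$, a connected cograph with $\reg(S/\bei_G) = 1$; to meet the vertex count $\tfrac{3}{2}(r+3) = 6$ on the nose one may instead use $K_6$, which is also a connected cograph with regularity~$1$. For $r \geq 2$ first build a disconnected cograph $G_0$ of regularity exactly $r$ using additivity: if $r = 2m$ is even, let $G_0$ be $m$ disjoint copies of $P_3$ ($|V(G_0)| = 3m$); if $r = 2m+1 \geq 3$ is odd, let $G_0$ be $m$ disjoint copies of $P_3$ together with one copy of $K_2$ ($|V(G_0)| = 3m+2$). In both cases $G_0$ has at least two connected components, hence is not complete.

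Setting $G = G_0 \ast \{v\}$ yields a connected cograph, and Theorem~\ref{theorem:join:reg} applies because $G_0$ is not complete, giving $\reg(S/\bei_G) = \max\{r, 0, 2\} = r$ since $r \geq 2$. The vertex count is $3m + 1 = \tfrac{3r}{2} + 1$ in the even case, matching the claim exactly, and $3m + 3 = \tfrac{3(r+1)}{2}$ in the odd case; the latter is already bounded by $\tfrac{3(r+3)}{2}$ and can be made equal to it by appending three isolated vertices to $G_0$ before coning (they contribute $0$ to the regularity by additivity and preserve the cograph property). The only real steps to verify are that the hypotheses of Theorem~\ref{theorem:join:reg} hold at the cone step and that $G_0$ and $G$ are cographs, both of which are immediate from the construction; I do not expect any genuine obstacle beyond this bookkeeping.
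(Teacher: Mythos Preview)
Your proof is correct and follows essentially the same route as the paper's: build a disconnected cograph of the desired regularity from copies of $P_3$ (and one $P_2$ in the odd case) using additivity, then cone to make it connected via Theorem~\ref{theorem:join:reg}. You are in fact more careful than the paper about hitting the stated vertex counts exactly (padding with isolated vertices or using $K_6$ for $r=1$); the paper's own construction in the odd case already yields $\tfrac{3}{2}(r+1)$ vertices, comfortably under the claimed $\tfrac{3}{2}(r+3)$.
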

\begin{proof}
  For $r=1$ a single edge suffices.  If $r = 2a$ is even, let $G$ be a disjoint
  union of $a$ copies of~$P_{3}$.  If $3 \le r=2a+1$ is odd, then let $G$ be a
  disjoint union of $a$ copies of $P_{3}$ and a single edge~$P_{2}$.  In both
  cases $\reg (S/\bei_{G}) = r$.  Let $\cone(G)$ be the join of $G$ with a
  single vertex graph.  Theorem~\ref{theorem:join:reg} implies that,
  $\reg(S/\bei_{\cone(G)}) = \reg(S/\bei_{G})$ and thus $\cone(G)$ is a
  connected cograph with regularity~$r$.
\end{proof}

With the lower bound settled, we aim for a stricter upper bound on the
regularity of cographs.  For this we employ the original definition of
complement reducible graphs: they are constructed recursively by
taking complements and disjoint unions of cographs, starting from the
single-vertex graph.  As a result, there is an unusual
one-to-one-relationship between connected and disconnected cographs of
the same order.

\begin{lemma}\label{lemma:cographs:complement}
  Let $G$ be a cograph with at least $2$ vertices. Then $G$ is
  connected if and only if $\compl{G}$ is disconnected.
\end{lemma}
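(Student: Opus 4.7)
The plan is to split the biconditional into its two directions, which have very different natures: one is a general fact about graphs and their complements, while the other is where the cograph hypothesis is actually used.

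My first step would be the implication $\compl{G}\text{ disconnected}\Rightarrow G\text{ connected}$, which does not need the cograph hypothesis at all. If $V_{1},\dots,V_{k}$ (with $k\ge 2$) are the vertex sets of the connected components of $\compl{G}$, then for any $i\neq j$ and any $u\in V_{i}$, $v\in V_{j}$, the pair $\{u,v\}$ is a non-edge of $\compl{G}$ and hence an edge of $G$. Thus $G$ contains a spanning complete multipartite subgraph with parts $V_{1},\dots,V_{k}$, and in particular $G$ is connected. I would write this out in a couple of lines and move on.

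For the harder direction $G\text{ connected}\Rightarrow\compl{G}\text{ disconnected}$, the shortest path is to invoke the Corneil--Lerchs--Burlingham characterization recalled in the introduction: every connected induced subgraph of a cograph on at least two vertices has a disconnected complement. Taking the induced subgraph to be $G$ itself gives the conclusion immediately. If one prefers an argument from first principles, I would instead use structural induction on the recursive construction of $G$: a cograph with at least two vertices is either a disjoint union of two non-empty cographs (which cannot happen under the connected hypothesis) or the complement of a cograph $H$ built by a strictly smaller construction; applying the induction hypothesis to $H$ forces $H$ to be disconnected, and hence $\compl{G}=H$ is disconnected.

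The only subtlety worth flagging is that, because complementation preserves the vertex count, the inductive variant must proceed on the size of the construction tree rather than on $|V(G)|$. This is the only place where care is needed; otherwise the lemma is a short consequence of either the cited characterization or the recursive definition itself.
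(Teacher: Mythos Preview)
Your proposal is correct and follows essentially the same route as the paper: the easy direction is handled by the same observation that vertices in distinct components of $\compl{G}$ are adjacent in $G$ (you phrase this as a spanning complete multipartite subgraph, while the paper does a two-case argument, but the content is identical), and for the harder direction both you and the paper simply invoke the Corneil--Lerchs--Burlingham characterization. Your optional inductive alternative is a nice addition that the paper does not include, and your remark about inducting on the construction-tree size rather than $|V(G)|$ is exactly the right caveat for that variant.
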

\begin{proof}
  Let $\compl{G}$ be disconnected and $v,w \in V(G)$.  If $v$ and $w$ are in
  different connected components of $\compl{G}$, then $\{v,w\} \in E(G)$, so
  there is a path from $v$ to $w$ in~$G$.
  If $v$ and $w$ are in the same connected component of~$\compl{G}$, then there
  exists another vertex $u \in V(G)$ in a different connected component
  of~$\compl{G}$.  In particular, $\{v,u\}, \{w,u\} \notin E(\compl{G})$ and so
  $\{v,u\}, \{w,u\} \in E(G)$, so $(v,u,w)$ is a path from $v$ to $w$ in $G$.
  Thus $G$ is connected.
	
  The other implication follows from the
  stronger~\cite[Theorem~2]{CorneilLerchsBurlingham1981}: every induced subgraph
  of a cograph with more than one vertex has disconnected complement.
\end{proof}

In Lemma~\ref{lemma:cographs:complement}, the implication $\compl{G}$
disconnected $\Rightarrow$ $G$ connected holds for any graph, not just
cographs.  In the same generality, the join of graphs can be expressed
using complement and disjoint union.

\begin{lemma}\label{lemma:join:complement}
  Let $G_1$ and $G_2$ be simple undirected graphs. Then
  $G_1 \ast G_2 = \compl{\compl{G_1} \sqcup \compl{G_2}}$.
\end{lemma}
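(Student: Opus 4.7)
The plan is to verify the identity directly at the level of vertex sets and edge sets. Both sides clearly have vertex set $V(G_1) \sqcup V(G_2)$, so the work is to match edges.

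First I would fix any pair of distinct vertices $v,w \in V(G_1) \sqcup V(G_2)$ and split into the three cases: (i) $v,w \in V(G_1)$, (ii) $v,w \in V(G_2)$, (iii) $v$ and $w$ are in different parts. In each case I would check whether $\{v,w\}$ lies in $E(G_1 \ast G_2)$ and whether it lies in $E(\overline{\overline{G_1}\sqcup \overline{G_2}})$, and verify they agree.

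In case (i), $\{v,w\} \in E(G_1 \ast G_2)$ iff $\{v,w\} \in E(G_1)$ by definition of the join, and $\{v,w\} \in E(\overline{\overline{G_1}\sqcup \overline{G_2}})$ iff $\{v,w\} \notin E(\overline{G_1}) \cup E(\overline{G_2})$; since $v,w \in V(G_1)$, the only way this pair could appear in $E(\overline{G_1}) \cup E(\overline{G_2})$ is through $E(\overline{G_1})$, so the condition reduces to $\{v,w\} \notin E(\overline{G_1})$, which is equivalent to $\{v,w\} \in E(G_1)$. Case (ii) is symmetric. In case (iii), $\{v,w\}$ is automatically an edge of the join, and on the other side the pair cannot lie in $E(\overline{G_1}) \cup E(\overline{G_2})$ at all (each of those edge sets lives inside a single part), so $\{v,w\}$ is an edge of the complement as well.

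There is no real obstacle here; the statement is essentially definitional. The only thing to be careful about is the bookkeeping in case (iii), ensuring that the disjoint union is interpreted as a graph whose edges are only the internal edges of the two summands, so that all cross-part pairs become edges after taking the complement.
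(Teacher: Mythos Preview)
Your proposal is correct and follows essentially the same approach as the paper: both argue edge-by-edge, splitting according to whether the two endpoints lie in the same $V(G_i)$ or in different parts, and checking the definitions in each case. The paper merely compresses your cases (i) and (ii) into a single symmetric case, but the content is identical.
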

\begin{proof}
  Let $V$ denote the common vertex set of both graphs.  Let $e = \{v,w\}$ where
  $v,w \in V$ are arbitrary vertices.  If $e \nsubseteq V(G_i)$ for
  $i \in \{1,2\}$, then $e$ is an edge in both
  $\compl{\compl{G_1} \sqcup \compl{G_2}}$ and $G_1 \ast G_2$.  If
  $e \subseteq V(G_i)$ for one $i \in \{1,2\}$, then
  $e \in E(\compl{\compl{G_1} \sqcup \compl{G_2}})$ if and only if $e$ is an
  edge in~$G_i$, which is the case if and only if $e \in E(G_1 \ast G_2)$.
\end{proof}

\begin{lemma}\label{lemma:cographs:join}
  A connected cograph $G$ is the join of induced subgraphs $G_1,\dots,G_m$,
  which are exactly the complements of the connected components of $\compl{G}$.
\end{lemma}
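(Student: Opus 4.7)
The plan is to build the decomposition directly from the two preceding lemmas. Assume $G$ has at least two vertices (the single-vertex case is trivial with $m=1$). Since $G$ is a connected cograph, Lemma~\ref{lemma:cographs:complement} tells us that $\compl{G}$ is disconnected, so we can write
\[
  \compl{G} = H_1 \sqcup \cdots \sqcup H_m
\]
as the disjoint union of its connected components, with $m \ge 2$. Taking complements on both sides yields $G = \compl{H_1 \sqcup \cdots \sqcup H_m}$.

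Next I would extend Lemma~\ref{lemma:join:complement} from two summands to $m$ summands by a short induction on~$m$. The base case $m=2$ is exactly the lemma, and the inductive step uses associativity of $\sqcup$ and $\ast$: grouping off $H_1$ gives
\[
  \compl{H_1 \sqcup (H_2 \sqcup \cdots \sqcup H_m)}
  = \compl{H_1} \ast \compl{H_2 \sqcup \cdots \sqcup H_m}
  = \compl{H_1} \ast \compl{H_2} \ast \cdots \ast \compl{H_m},
\]
where the first equality is Lemma~\ref{lemma:join:complement} and the second is the induction hypothesis. Setting $G_i := \compl{H_i}$, we obtain $G = G_1 \ast \cdots \ast G_m$.

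It remains to verify that each $G_i$ really is the induced subgraph of $G$ on $V(H_i)$. By the definition of the join $G_1 \ast \cdots \ast G_m$, any edge of $G$ whose two endpoints both lie in $V(G_i) = V(H_i)$ must come from $G_i$ itself, since edges supplied by the join connect different $V(G_j)$'s. Hence the induced subgraph of $G$ on $V(H_i)$ equals $G_i = \compl{H_i}$, as claimed.

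I do not anticipate a serious obstacle: the argument is essentially an iterated application of Lemma~\ref{lemma:join:complement}, powered by Lemma~\ref{lemma:cographs:complement} to guarantee that the decomposition into components has at least two blocks. The only mild care needed is in the induction extending the pairwise complement-join identity to arbitrarily many components, and in confirming the ``induced subgraph'' clause from the definition of join.
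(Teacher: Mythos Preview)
Your proposal is correct and follows essentially the same route as the paper: use Lemma~\ref{lemma:cographs:complement} to get $\compl{G}$ disconnected, then apply Lemma~\ref{lemma:join:complement} (extended to $m$ summands via associativity of $\sqcup$ and $\ast$) to obtain $G = G_1 \ast \cdots \ast G_m$ with $G_i = \compl{H_i}$. The paper compresses your induction into the phrase ``as both $\sqcup$ and $\ast$ are associative'' and justifies the induced-subgraph claim by ``complementing twice'' rather than by inspecting the join, but these are only cosmetic differences.
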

\begin{proof}
  By Lemma~\ref{lemma:cographs:complement}, $\compl{G}$ is disconnected.  Let
  $\compl{G_1},\dots,\compl{G_m}$ be the connected components of $\compl{G}$ and
  $G_1,\dots,G_m$ their complements.  The $G_{i}$ are induced subgraphs of $G$
  since their edge sets arise from that of $G$ by complementing twice.  Then, by
  Lemma~\ref{lemma:join:complement},
  $G = \compl{\compl{G_1} \sqcup \dots \sqcup \compl{G_m}} = G_1 \ast \dots \ast
  G_m$ as both $\sqcup$ and $\ast$ are associative.
\end{proof}
Using Lemma~\ref{lemma:cographs:join}, we can assume that any connected cograph
$G$ is written as a join of the complements of the connected components of its
complement.  Since any induced subgraph of a cograph is a cograph, the $G_{i}$
in the lemma are cographs too and by Lemma~\ref{lemma:cographs:complement} they
are disconnected or have only one vertex.

\begin{proposition}\label{proposition:cographs:regConnected}
  Let $G$ be a connected cograph that is not complete. Then
  \begin{equation*}
    \reg(S/\bei_{G}) = \max(\{2\} \cup \{\reg(S/\bei_{G_{i}}) : i \in \left[m\right]\}),
  \end{equation*}
  where $G = G_1 \ast \dots \ast G_m$ as in Lemma~\ref{lemma:cographs:join} and
  the $G_i$ are cographs that each are either disconnected or single vertices.
\end{proposition}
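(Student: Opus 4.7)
The plan is to iterate Theorem~\ref{theorem:join:reg} along the decomposition $G = G_1 \ast \dots \ast G_m$. That theorem collapses the regularity of a two-fold join into the maximum of the regularities of the pieces and $2$, provided the two pieces are not both complete; so, modulo verifying the hypothesis at each step, the conclusion is immediate from an induction on~$m$.

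First I would pin down two easy observations. Since $G$ is not complete it has at least two vertices, so by Lemma~\ref{lemma:cographs:complement} the graph $\compl{G}$ is disconnected and $m\ge 2$. Moreover, each $G_i$ is either a single vertex (hence complete with $\reg(S/\bei_{G_i})=0$) or disconnected (hence \emph{not} complete, because a complete graph on $\ge 2$ vertices is connected). Since $G = G_1 \ast \dots \ast G_m$ would be complete if every $G_i$ were a single vertex, at least one $G_i$ is disconnected and in particular not complete.

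The base case $m=2$ is exactly Theorem~\ref{theorem:join:reg}: $G_1 \ast G_2$ has not-both-complete factors by the previous paragraph. For $m\ge 3$ write $G = H \ast G_m$ with $H = G_1 \ast \dots \ast G_{m-1}$. The induction splits into two subcases depending on whether $H$ is complete. If $H$ is not complete, Theorem~\ref{theorem:join:reg} applied to $G = H\ast G_m$ gives
\[
  \reg(S/\bei_G) = \max\{\reg(S/\bei_H),\reg(S/\bei_{G_m}),2\},
\]
and since $H$ is a connected cograph (joins of cographs are cographs, and a join is connected) with the decomposition $H = G_1 \ast \dots \ast G_{m-1}$ satisfying the hypotheses of the proposition, the induction hypothesis rewrites $\reg(S/\bei_H)$ as $\max(\{2\}\cup\{\reg(S/\bei_{G_i}) : i\in[m-1]\})$, and combining the two max's finishes this subcase.

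The only real wrinkle is the subcase where $H$ is complete. Then each $G_i$ with $i\le m-1$ is an induced subgraph of a complete graph and is therefore complete, and by the observation above must in fact be a single vertex; so $H = K_{m-1}$ and the non-complete $G_j$ identified earlier has to be $G_m$. Here $\reg(S/\bei_{G_i})=0$ for $i<m$ and $\reg(S/\bei_{H})\le 2$ (actually $=1$ when $m-1\ge 2$), so Theorem~\ref{theorem:join:reg} applied to $G = H \ast G_m$ gives $\reg(S/\bei_G) = \max\{\reg(S/\bei_{G_m}),2\}$, which agrees with $\max(\{2\}\cup\{\reg(S/\bei_{G_i}) : i \in [m]\})$. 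Verifying the decomposition compatibility in the first subcase and isolating this degenerate ``complete $H$'' case are the only non-mechanical steps; everything else is a direct application of Theorem~\ref{theorem:join:reg}.
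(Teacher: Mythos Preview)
Your proof is correct and follows the same approach as the paper's own proof, which is simply the one-liner ``Since $G$ is not complete, the $G_i$ cannot all be complete, so the equation follows from Lemma~\ref{lemma:cographs:join} and Theorem~\ref{theorem:join:reg}.'' You have merely made the implicit iteration of Theorem~\ref{theorem:join:reg} explicit and carefully handled the degenerate case where the partial join $H$ is complete; this is more detail than the paper provides but not a different argument.
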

\begin{proof}
  Since $G$ is not complete, the $G_i$ cannot all be complete, so the equations
  follows from Lemma~\ref{lemma:cographs:join} and
  Theorem~\ref{theorem:join:reg}.
\end{proof}

We now have all the ingredients for a recursive computation of
$\reg(S/\bei_{G})$ for any cograph~$G$.  In the disconnected case, add the
regularities of all connected components.  In the connected case, compute the
maximum regularity of the complements $G_i$ of the connected components
$\compl{G_i}$ of~$\compl{G}$.  We use this to bound the maximum regularity.

\begin{theorem}\label{theorem:cographs:maxReg}
  Let $G$ be a cograph on $3k-a$ vertices, with $k \in \N$ and
  $a \in \{0,1,2\}$. Then
  \begin{equation*}
    \reg(S/\bei_{G}) \leq 2k - a.
  \end{equation*}
  If $G$ is connected, $k > 1$, and $a \in \{0,1\}$, then
  $\reg(S/\bei_{G}) \leq 2k - a - 1$.
\end{theorem}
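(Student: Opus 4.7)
The plan is to prove both bounds simultaneously by strong induction on $n = |V(G)|$. Writing $n = 3k - a$ with $a \in \{0,1,2\}$, the claimed general bound $2k - a$ coincides with $f(n) := \lfloor 2n/3 \rfloor$. The inductive hypothesis reads: every cograph on fewer than $n$ vertices has regularity at most $f$ of its order, with the stronger bound $f(\cdot) - 1$ whenever it is connected, has more than three vertices, and its order is not congruent to $1$ modulo $3$. The base cases $n \leq 3$ are direct computations, and the strict bound is vacuous there since it requires $k > 1$.

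For the inductive step with $n \geq 4$, if $G$ is disconnected, decompose into connected components $G = G_1 \sqcup \cdots \sqcup G_s$ with $s \geq 2$. Additivity of regularity under disjoint union and the inductive hypothesis yield $\reg(S/\bei_G) = \sum_i \reg(S/\bei_{G_i}) \leq \sum_i f(n_i)$. Since $\lfloor x \rfloor + \lfloor y \rfloor \leq \lfloor x + y \rfloor$, this sum is bounded by $f(n) = 2k - a$, establishing the general bound. No strict bound is claimed when $G$ is disconnected.

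If instead $G$ is connected and complete, then $\reg(S/\bei_G) = 1$, which falls below every bound claimed for $n \geq 4$. Otherwise Lemma~\ref{lemma:cographs:join} and Proposition~\ref{proposition:cographs:regConnected} decompose $G = G_1 \ast \cdots \ast G_m$ with $m \geq 2$, each $G_i$ either disconnected or a single vertex, and $\reg(S/\bei_G) = \max(\{2\} \cup \{\reg(S/\bei_{G_i})\})$. Because $m \geq 2$, every summand satisfies $n_i \leq n - 1$, so the inductive hypothesis bounds each $\reg(S/\bei_{G_i})$ by $f(n_i) \leq f(n-1)$; since $f(n-1) \geq 2$ for $n \geq 4$, this gives $\reg(S/\bei_G) \leq f(n-1)$.

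A brief case split on $a$ closes the argument: $f(n-1) = 2k-1$ when $a=0$, $f(n-1) = 2k-2$ when $a=1$, and $f(n-1) = 2(k-1) = 2k-2$ when $a=2$. The first two values equal the strict bound $2k - a - 1$, while the third equals the general bound $2k - a$ (which is all that is claimed when $a=2$). No individual step is technically hard; the only delicate point is keeping the general and strict bounds properly threaded through the recursion. The conceptual content is the observation that the join decomposition in the connected, non-complete case forces a one-vertex drop in every summand, and this drop is exactly what upgrades the general bound to the strict one whenever $a \in \{0,1\}$.
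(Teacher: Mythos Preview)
Your proof is correct and follows essentially the same approach as the paper's: induction on the number of vertices, additivity of regularity for the disconnected case, and the join decomposition of Lemma~\ref{lemma:cographs:join} together with Proposition~\ref{proposition:cographs:regConnected} for the connected case. Your reformulation of the bound as $f(n)=\lfloor 2n/3\rfloor$ and the use of floor subadditivity $\lfloor x\rfloor+\lfloor y\rfloor\le\lfloor x+y\rfloor$ is a tidy replacement for the paper's explicit case split on $a_H+a_{H'}\in\{a,a+3\}$, but the underlying argument is the same.
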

\begin{proof}
  The proof is by induction over $k$.  The cographs with at most $3$ vertices
  are $K_1$, $K_2=P_1$, $\compl{K_2}$, $K_3$, $\compl{K_3}$, $P_2$ and
  $\compl{P_2}$.  Then $\reg(S/\bei_{G}) \leq 2 - a = \reg(S/\bei_{P_{3-a}})$.
	
  Now let $k>1$.  By Proposition~\ref{proposition:cographs:regConnected}, if $G$
  is connected, it either has regularity $2$ or there is a smaller, disconnected
  cograph with the same regularity.  So it can be assumed that $G$ is
  disconnected.  Let $H$ be a connected component of $G$ with $3k_H - a_H$
  vertices and $H' = G \setminus H$ have $3k_{H'} - a_{H'}$ vertices, where
  $k_H,k_{H'} \in \N$ and $a_H,a_{H'} \in \{0,1,2\}$.  Then both $H$ and $H'$
  have fewer vertices than $G$ and
  \begin{equation*}
      k_H + k_{H'} =
      \begin{cases}
	k &\text{if } a_H + a_{H'} = a, \\
      k + 1 &\text{if } a_H + a_{H'} = a + 3.
      \end{cases}
  \end{equation*}
  By induction $\reg(S/\bei_{H}) \leq 2k_H - a_H$ and $\reg(S/\bei_{H'})
  \leq 2k_{H'} - a_{H'}$, and 
  \begin{align*}
    \reg(S/\bei_{G}) &\leq 2k_H - a_H + 2k_{H'} - a_{H'} \\
		     &= \begin{cases}
		       2k - a & \text{if } a_H + a_{H'} = a, \\
		       2k - a - 1 & \text{if } a_H + a_{H'} = a + 3 \\
		     \end{cases} \\
		     &\leq 2k - a.
  \end{align*}
  If $G$ is connected and $k > 1$, the regularity is either $2$ or at most that
  of a disconnected cograph with fewer vertices.  So if $G$ has $3k - a$
  vertices, with $a \in \{0,1\}$, then $\reg(S/\bei_{G}) \leq 2k - a - 1$.  If
  $G$ has $3k - 2$ vertices, then $\reg(S/\bei_{G}) \leq 2(k - 1) = 2k - 2$.
\end{proof}

The following example shows that the bounds for disconnected cographs in
Theorem~\ref{theorem:cographs:maxReg} can be realized.
\begin{example}\label{ex:maxreg}
  Let $n = 3k-a \in\NN$ be a positive integer with $a\in\{0,1,2\}$.  If $a = 0$,
  let $G_{n}$ be a disjoint union of $k$ copies of $P_{3}$.  If $a = 1$, then
  let $G_{n}$ be a disjoint union of $k-1$ copies of $P_{3}$ and one of~$P_{2}$.
  If $k=2$, let $G_{n}$ be a disjoint union of two copies of $P_{2}$ and $k-2$
  copies of~$P_{3}$.  Since $P_{n}$ has regularity $n-1$ and regularity is
  additive under disjoint union, $G_{n}$ has regularity~$2k-a$.
\end{example}

To make connected examples, use the cone construction from
Proposition~\ref{prop:cographsHighReg}.  Even more, in two of three cases the
graphs from Example~\ref{ex:maxreg} are the only cographs of maximum regularity
as the following theorem shows.
\begin{theorem}\label{thm:Characterize}
  Let $G$ be a cograph with $3k - a$ vertices, where $k \in \N$ and
  $a \in \{0,1\}$. Then $\reg(S/\bei_{G}) = 2k - a$ if and only if $G$ is a
  disjoint union of $P_{3}$ and at most one~$P_{2}$.
\end{theorem}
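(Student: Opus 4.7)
The ``if'' direction is immediate from Example~\ref{ex:maxreg} together with the additivity of regularity under disjoint unions: $k - a$ copies of $P_3$ and $a$ copies of $P_2$ assemble into a graph on $3(k-a) + 2a = 3k - a$ vertices with regularity $2(k-a) + a = 2k - a$.

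For the converse direction my plan is to decompose $G$ into its connected components $C_1, \dots, C_r$, write $|V(C_i)| = 3k_i - a_i$ with $k_i \geq 1$ and $a_i \in \{0,1,2\}$, and apply the component-wise bound from Theorem~\ref{theorem:cographs:maxReg}. Additivity of regularity then gives
\begin{equation*}
  2k - a \;=\; \sum_{i=1}^r \reg(S/\bei_{C_i}) \;\leq\; \sum_{i=1}^r (2k_i - a_i).
\end{equation*}
I would combine this with the vertex-count identity $3\sum_i k_i - \sum_i a_i = 3k - a$ and the hypothesis $a \leq 1$ to conclude $\sum k_i = k$, $\sum a_i = a$, and tightness $\reg(S/\bei_{C_i}) = 2k_i - a_i$ for every component. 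In particular no $a_i$ can equal $2$, and at most one $a_i$ equals~$1$. I expect this numerical bookkeeping to be the main hurdle; once it is done the classification should be essentially automatic.

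To finish, I would invoke the strengthened assertion of Theorem~\ref{theorem:cographs:maxReg}: a connected cograph with $k_i > 1$ and $a_i \in \{0,1\}$ has regularity strictly less than $2k_i - a_i$, contradicting tightness unless $k_i = 1$. So every $C_i$ has at most $3$ vertices, and inspecting the connected cographs on $2$ and $3$ vertices ($K_2$, $K_3$, $P_3$, with regularities $1$, $1$, $2$) identifies $C_i$ as $P_3$ when $a_i = 0$ and as $P_2$ when $a_i = 1$. Since $\sum a_i = a$, this forces $G$ to be a disjoint union of $k - a$ copies of $P_3$ and $a$ copies of $P_2$, as claimed.
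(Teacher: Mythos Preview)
Your argument is correct and uses the same ingredients as the paper---additivity of regularity, the componentwise bound of Theorem~\ref{theorem:cographs:maxReg}, and its sharpened version for connected cographs with $k_i>1$---but your bookkeeping is tidier. The paper peels off one connected component $H$ at a time and splits into cases according to whether $a_H=2$; when $a_H=2$ it must locate a second small component and redo the estimate on a three-way split, and afterwards it still treats $a=0$ and $a=1$ separately. Your global approach avoids all of this: summing the vertex counts and the regularity bounds simultaneously gives $\sum k_i\le k$ and $\sum a_i=3(\sum k_i-k)+a$, and nonnegativity of $\sum a_i$ forces $\sum k_i=k$, $\sum a_i=a\le 1$. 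This single stroke rules out any $a_i=2$ (hence isolated vertices) and yields tightness in every component, after which the sharpened connected bound and the inspection of $2$- and $3$-vertex graphs finish exactly as you say. The only place where you should be explicit is the derivation of $\sum k_i=k$ and $\sum a_i=a$ from the two displayed relations; once that is written out, your proof is both complete and shorter than the paper's.
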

\begin{proof}
  With Example~\ref{ex:maxreg} in place, just the only-if-direction remains.
  Let $G$ be a cograph.  By Theorem~\ref{theorem:cographs:maxReg}, there is
  nothing to prove if $G$ is connected, so we assume it is disconnected.  We
  first show that any connected component has at most $3$ vertices.  To this
  end, let $H$ be a connected component of $G$ with $3k_H - a_H$ vertices,
  $k_{H}>1$, and $a_H \in \{0,1,2\}$.  Let $H' = G \setminus H$ have
  $3k_{H'} - a_{H'}$ vertices with $a_{H'} \in \{0,1,2\}$.  Then
  \begin{equation*}
    k_H + k_{H'} =\begin{cases}
      k &\text{if } a_H + a_{H'} = a, \\
      k + 1 &\text{if } a_H + a_{H'} = a + 3.
    \end{cases}
  \end{equation*}
  If $a_{H}\neq 2$, then, by Theorem~\ref{theorem:cographs:maxReg},
  \begin{align*}
    \reg(S/\bei_{G}) &\leq 2k_H - a_H - 1 + 2k_{H'} - a_{H'} \\
		     &= \begin{cases}
		       2k - a - 1 & \text{if } a_H + a_{H'} = a, \\
		       2k - a - 2 & \text{if } a_H + a_{H'} = a + 3 \\
		     \end{cases} \\
		     &< 2k - a.
  \end{align*}
  If otherwise $a_H = 2$, then since $a \in \{0,1\}$, $G$ must have another
  connected component $H'$ with $3k_{H'} - a_{H'}$ vertices and
  $a_{H'} \in \{1,2\}$.  Let $H'' = G \setminus (H \sqcup H')$ with
  $3k_{H''} - a_{H''}$ vertices, where $k_{H''} \in \N_0$ and
  $a_{H''} \in \{0,1,2\}$.  Then 
  \begin{gather*}
    k_H + k_{H'} + k_{H''} =
    \begin{cases}
      k + 1 & \text{if } a_H + a_{H'} + a_{H''} = a + 3, \\
      k + 2 & \text{if } a_H + a_{H'} + a_{H''} = a + 6.
    \end{cases}
  \end{gather*}
  Therefore Theorem~\ref{theorem:cographs:maxReg} implies
  \begin{align*}
    \reg(S/\bei_{G}) &\leq 2k_H - a_H + 2k_{H'} - a_{H'} + 2k_{H''} - a_{H''} \\
		     &= \begin{cases}
		       2k - a - 1 & \text{if } a_H + a_{H'} + a_{H''} = a + 3, \\
		       2k - a - 2 & \text{if } a_H + a_{H'} + a_{H''} = a + 6 \\
		     \end{cases} \\
		     &< 2k - a.
  \end{align*}
  We have thus shown that if $G$ has a connected component with more than $3$
  vertices, it cannot have maximum regularity. Since the only cographs of
  maximum regularity with $3$ and $2$ vertices are $P_3$ and $P_2$ respectively,
  it follows that if $G$ has maximum regularity, then it is a disjoint union of
  $2$-paths $P_{3}$, single edges $P_{2}$ and isolated vertices.  We now analyze
  these cases separately for two possible values of $a$.
	
  Suppose first that $a=0$.  If $G$ has a connected component $H$ with one or
  two vertices, that is $3 - a_H$ vertices and $a_H \in \{1,2\}$, then $G$ must
  have another connected component $H'$ with $3 - a_{H'}$ vertices, where
  $a_{H'} \in \{1,2\}$.  With a similar computation as above we find that
  $\reg(S/\bei_{G}) < 2k$.  Therefore if $G$ has $3k$ vertices and maximal
  regularity, each connected component must have exactly $3$ vertices and be
  equal to~$P_{3}$.
	
  Finally, consider the case $a = 1$.  If $G$ has an isolated vertex, then $G$
  must have another connected component with fewer than $3$ vertices.  Then $G$
  cannot have maximal regularity as isolated vertices contribute no regularity
  and a disjoint union of an edge and a vertex has regularity 1.  If there are
  two isolated edges, then the subgraph on these $4$ vertices contributes
  regularity only $2$ and thus $G$ cannot have maximal regularity.
\end{proof}

\begin{remark}
  Graphs with $3k - 2$ vertices and maximal regularity, those in the $a=2$ case
  of Theorem~\ref{thm:Characterize}, do not have a simple characterization.  For
  example, the class contains cones over disjoint unions of $2$-paths as well as
  other types of joins and disjoint unions of joins, paths, and isolated
  vertices.
\end{remark}

The following corollary partially characterizes connected maximers of
regularity.
\begin{corollary}\label{cor:isCone}
  Let $G$ be a connected cograph with maximum regularity among connected
  cographs on $3k-a$ vertices with $k>1$ and $a \in \{0,2\}$, that is,
  $\reg(S/\bei_{G}) = 2k-1$ for $a=0$ and $\reg(S/\bei_{G}) = 2k-2$ for $a=2$.
  Then $G$ is a cone.
\end{corollary}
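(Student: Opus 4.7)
The plan is to combine the join decomposition of connected cographs (Lemma~\ref{lemma:cographs:join}) with the regularity bound of Theorem~\ref{theorem:cographs:maxReg} and argue by contradiction. Since the hypotheses force $\reg(S/\bei_G) \geq 2$, the graph $G$ is not complete, and Lemma~\ref{lemma:cographs:join} provides a decomposition $G = G_1 \ast \cdots \ast G_m$ in which each $G_i$ is either disconnected or a single vertex. I would suppose for contradiction that $G$ is not a cone, so that no $G_i$ equals $K_1$; then every $G_i$ is disconnected with at least two vertices, giving the vertex-count bound $n_j \leq n - 2(m-1)$ for each $j \in [m]$, where $n_j = |V(G_j)|$.

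Next, in the generic regime where $\reg(S/\bei_G) \geq 3$ (which covers $a = 0$ with $k \geq 2$ and $a = 2$ with $k \geq 3$), Proposition~\ref{proposition:cographs:regConnected} forces the maximum $\reg(S/\bei_G) = \max(\{2\} \cup \{\reg(S/\bei_{G_i}) : i \in [m]\})$ to be realized by some $\reg(S/\bei_{G_j})$. Writing $n_j = 3k_j - a_j$ with $a_j \in \{0,1,2\}$ and applying the disconnected case of Theorem~\ref{theorem:cographs:maxReg} to $G_j$ gives $2k_j - a_j \geq \reg(S/\bei_{G_j})$, which equals $2k-1$ or $2k-2$ by hypothesis. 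Combining this with $3k_j - a_j = n_j \leq 3k - a - 2(m-1)$ and eliminating $a_j$ leaves an inequality of the form $k_j \leq k + c - 2m$ for a small constant $c \in \{2,3\}$, while the constraint $a_j \geq 0$ pins $k_j$ from below near $k$. Both inequalities can coexist only for $m \leq 1$, and $m = 1$ would force $G = G_1$ to be disconnected, contradicting connectedness. Hence some $G_i$ must be $K_1$ after all, so $G$ is a cone.

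The remaining small case $a = 2$, $k = 2$ (so $n = 4$ and the target regularity is only $2$) lies outside this argument because the constant $2$ in the proposition's maximum can then be the one attained without any $G_j$ matching it. I would dispose of it by inspecting the join decompositions of connected cographs on four vertices directly. The main obstacle in the sketch is the tightness of the arithmetic: one must use the weaker \emph{disconnected} bound of Theorem~\ref{theorem:cographs:maxReg} on $G_j$, because the contradictory hypothesis made every $G_i$ disconnected, and the extremal hypothesis on $\reg(S/\bei_G)$ leaves essentially no slack, so already ruling out $m = 2$ exhausts the inequality.
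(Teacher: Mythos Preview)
Your approach is the paper's: decompose $G = G_1 \ast \cdots \ast G_m$ via Lemma~\ref{lemma:cographs:join}, assume for contradiction that no $G_i$ is a single vertex, pick a factor $G_j$ realizing $\reg(S/\bei_G)$ by Proposition~\ref{proposition:cographs:regConnected}, and bound its regularity with Theorem~\ref{theorem:cographs:maxReg}. The paper's arithmetic is leaner than yours: since $m\ge 2$ and every factor has at least two vertices, any such $G_j$ has at most $3k-a-2$ vertices, and the \emph{general} bound of Theorem~\ref{theorem:cographs:maxReg} (there is no separate ``disconnected case'') already gives $\reg(S/\bei_{G_j})\le 2k-2$ for $a=0$ and $\le 2k-3$ for $a=2$, contradicting the hypothesis. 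Tracking $m$ and eliminating $a_j$ as you do reproduces this but is more work than needed.

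You are right, and more careful than the paper, to isolate $(k,a)=(2,2)$: there the target regularity is $2$, and the constant $2$ in Proposition~\ref{proposition:cographs:regConnected} can itself achieve the maximum, so no $G_j$ need have regularity~$2$. The paper's assertion that ``$\reg(S/\bei_G)=\reg(S/\bei_{G'})$ for some $G'$ with at least two fewer vertices'' is not justified in this situation. However, your proposed disposal by direct inspection cannot succeed: $K_{2,2}=\overline{K_2}\ast\overline{K_2}$ is a connected cograph on four vertices, has $\reg(S/\bei_{K_{2,2}})=\max\{0,0,2\}=2$ by Theorem~\ref{theorem:join:reg}, and has no universal vertex, so it is not a cone. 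Thus the statement as written actually fails for $(k,a)=(2,2)$; for all remaining parameter values your argument and the paper's coincide and are correct.
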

\begin{proof}
  Let $G$ be a connected cograph with $3k-a$ vertices and maximum regularity,
  where $k>1$ and $a\in\{0,2\}$.  By Lemma~\ref{lemma:cographs:join},
  $G = G_{1}\ast \dotsb \ast G_{m}$ where the $G_{i}$ are disconnected or single
  vertices.  Proposition~\ref{proposition:cographs:regConnected} shows that if
  $G$ is not a cone, that is, neither of the $G_{i}$ is a single vertex, then
  $\reg(S/\bei_{G}) = \reg(S/\bei_{G'})$ for some $G'$ which has at least two
  vertices fewer than~$G$.  So assume $G'$ has $3k-a-2$ vertices and apply
  Theorem~\ref{theorem:cographs:maxReg}.  If $a=0$, then $G'$ has $3k-2$
  vertices and thus regularity at most $2k-2$, but $G$ had maximum regularity
  $2k-1$, a contradiction.  If $a=2$, then $G'$ has $3(k-1)-1$ vertices and thus
  regularity at most $2(k-1)-1 = 2k-3$, but $G$ had maximum regularity $2k-2$,
  another contradiction.  By these contradictions, either $G$ is a cone after
  all, or its regularity was not maximal.
\end{proof}
Applying the reasoning in the proof to the $a=1$ case does not yield the
contradiction.  In this case, $G$ has $3k-1$ vertices and thus a regularity
bound of $2k-1-1 = 2k-2$ while $G'$ with its $3(k-1)$ vertices has regularity at
most~$2(k-1)$.


The results so far give a fairly clear picture of the asymptotic behaviour of
regularity in the class of cographs and individually for graphs with known
complement reducible decomposition.  If the exact structure of a cograph is
unknown, it can be useful to bound regularity using graph-theoretic invariants.
We consider here $\SizeOfMaxStabS(G)$ -- the size of the largest independent
set, $\NumOfMaxStabS(G)$ -- the number of maximal independent sets in $G$,
and~$\NumOfMaxCl(G)$ -- the number of maximal cliques of~$G$.  The recursive
construction of cographs yields bounds because these invariants satisfy simple
formulas under disjoint union and join:
\begin{align}
  \raggedright
  \NumOfMaxStabS(G \sqcup H) &= \NumOfMaxStabS(G)\NumOfMaxStabS(H), \label{equation:union:maxStable}\\
  \NumOfMaxStabS(G \ast H) &= \NumOfMaxStabS(G) + \NumOfMaxStabS(H), \label{equation:join:maxStable}\\
  \SizeOfMaxStabS(G \sqcup H) &= \SizeOfMaxStabS(G) + \SizeOfMaxStabS(H), \label{equation:union:stable}\\
  \SizeOfMaxStabS(G \ast H) &= \max\{\SizeOfMaxStabS(G), \SizeOfMaxStabS(H)\}. \label{equation:join:stable}
\end{align}

We find the following bounds which are independent of the number of vertices.
\begin{theorem}\label{thm:boundByInvariants}
  Let $G$ be a cograph. Then
  $\reg(S/\bei_{G}) \leq \min\{\NumOfMaxStabS(G), \SizeOfMaxStabS(G)\}$.
\end{theorem}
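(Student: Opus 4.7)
My plan is to induct on $|V(G)|$, exploiting the recursive structure of cographs together with the formulas~\eqref{equation:union:maxStable}--\eqref{equation:join:stable} that govern $\SizeOfMaxStabS$ and $\NumOfMaxStabS$ under the two building operations. The base case $|V(G)|=1$ is trivial since both sides equal $0$ or $1$. Complete graphs can be disposed of once and for all: $\reg(S/\bei_{K_n})=1$ for $n\geq 2$, while $\SizeOfMaxStabS(K_n)=1$ and $\NumOfMaxStabS(K_n)=n$, so the bound holds with room to spare. For every other $G$ I split into the connected and disconnected subcases.

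In the disconnected case, write $G=G_1\sqcup G_2$ with each $G_i$ a cograph on fewer vertices. Since regularity is additive under $\sqcup$, the $\SizeOfMaxStabS$-bound follows immediately from the inductive hypothesis and~\eqref{equation:union:stable}. For the $\NumOfMaxStabS$-bound, set $r_i := \reg(S/\bei_{G_i})$ and $t_i := \NumOfMaxStabS(G_i)$. The induction gives $r_i\leq t_i$. If some $t_i=1$, then $G_i$ has no edges and so $r_i=0$, hence $r_1+r_2\leq t_1t_2$ trivially. Otherwise $t_1,t_2\geq 2$, and $r_1+r_2\leq t_1+t_2\leq t_1t_2$. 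Combining with~\eqref{equation:union:maxStable} finishes this case.

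If $G$ is connected and not complete, Lemma~\ref{lemma:cographs:join} writes $G=G_1\ast\cdots\ast G_m$ with each $G_i$ either disconnected or a single vertex; since $G$ is not complete, some $G_j$ must be disconnected. Regrouping as $G=H_1\ast H_2$ with $H_1 := G_j$ and $H_2$ the join of the remaining factors expresses $G$ as a join of two cographs on fewer vertices, with $H_1$ not complete. Theorem~\ref{theorem:join:reg} then gives $\reg(S/\bei_G)=\max\{\reg(S/\bei_{H_1}),\reg(S/\bei_{H_2}),2\}$. The inductive hypothesis together with~\eqref{equation:join:stable} bounds each $\reg(S/\bei_{H_i})$ by $\SizeOfMaxStabS(H_i)\leq\SizeOfMaxStabS(G)$, and since $H_1$ is not complete we have $\SizeOfMaxStabS(H_1)\geq 2$, absorbing the constant in the max. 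The $\NumOfMaxStabS$-bound is analogous via~\eqref{equation:join:maxStable}: each $\reg(S/\bei_{H_i})\leq\NumOfMaxStabS(H_i)\leq\NumOfMaxStabS(G)$, and $\NumOfMaxStabS(G)\geq 2$ since both summands are positive.

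The only genuinely delicate step is the $\NumOfMaxStabS$-bound in the disconnected case, where the naive estimate $r_1+r_2\leq t_1+t_2\leq t_1t_2$ breaks down when some $t_i=1$. The saving observation is that $\NumOfMaxStabS(G_i)=1$ forces $G_i$ to be edgeless and therefore contribute nothing to the regularity; modulo this point, the entire argument is a clean reading-off of the four formulas~\eqref{equation:union:maxStable}--\eqref{equation:join:stable} along the cograph's join/union decomposition tree.
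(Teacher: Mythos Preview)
Your proof is correct and follows essentially the same approach as the paper: induction on $|V(G)|$, splitting into the join and disjoint-union cases and reading off the four formulas~\eqref{equation:union:maxStable}--\eqref{equation:join:stable}. You are in fact more careful than the paper in the disconnected $\NumOfMaxStabS$-step, where the paper simply writes $\sum_i \reg(S/\bei_{G_i}) \leq \prod_i \NumOfMaxStabS(G_i)$ without isolating the $t_i=1$ case as you do.
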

\begin{proof}

  The proof is by induction on the number of vertices of $G$.
  An isolated vertex has exactly one maximal independent set with one vertex and
  $\reg(S/\bei_{K_1}) = 0$, so the statement holds.
  Now let $G$ be any cograph.  If $G$ is connected, it is the join of smaller
  cographs $G_1,\dots,G_{m}$ and by induction
  \begin{equation*}
    \reg(S/\bei_{G_i}) \leq \min\{\NumOfMaxStabS(G_i), \SizeOfMaxStabS(G_i)\}
    \quad \text{ for all } i=1,\dots,m.
  \end{equation*}
  If $G$ is complete, both inequalities are trivial.  In the other case, it
  follows by Theorem~\ref{theorem:join:reg} and~\eqref{equation:join:maxStable}
  that
  \begin{equation*}
    \reg(S/\bei_{G}) = \max\{\{2\} \cup \{\reg(S/\bei_{G_i}) : i=1,\dots,m\}\}
    \leq \sum_{i=1}^{m} \NumOfMaxStabS(G_i) = \NumOfMaxStabS(G)
  \end{equation*}
  and, since $\SizeOfMaxStabS(G) = 1$ if and only if $G$ is complete,
  \eqref{equation:join:stable} leads to
  \begin{multline*}
    \reg(S/\bei_{G}) = \max\{\{2\} \cup \{\reg(S/\bei_{G_i}) : i=1,\dots,m\}\}
    \\
    \leq \max\{\SizeOfMaxStabS(G_i) : i=1,\dots,m\} = \SizeOfMaxStabS(G).
  \end{multline*}
  If $G$ is disconnected, it has connected components $G_1,\dots,G_{m}$ which by
  induction satisfy
  \begin{equation*}
    \reg(S/\bei_{G_i}) \leq \min\{\NumOfMaxStabS(G_i), \SizeOfMaxStabS(G_i)\}
    \text{ for all } i=1,\dots,m.
  \end{equation*}
  In this case, since $G$ is the disjoint union of its connected components, and
  since regularity is additive, with~\eqref{equation:union:maxStable} we have
  \begin{equation*}
    \reg(S/\bei_{G}) = \sum_{i} \reg(S/\bei_{G_i}) \leq \prod_i \NumOfMaxStabS(G_i) = \NumOfMaxStabS(G).
  \end{equation*}
  Finally, by ~\eqref{equation:union:stable} we have
  \begin{equation*}
    \reg(S/\bei_{G}) = \sum_{i} \reg(S/\bei_{G_i}) \leq \sum_{i} \SizeOfMaxStabS(G_i)
    = \SizeOfMaxStabS(G). \qedhere
  \end{equation*}
\end{proof}
The method of bounding by $\NumOfMaxStabS(G)$ seems coarse as the maximum and
sum over a set of intergers are, respectively, replaced by the sum and the
product over those integers.  Nevertheless $s(G)$ can be a good bound as
discussed in Remark~\ref{r:bestBound}.

Since independent vertices cannot be in the same clique,
$\SizeOfMaxStabS(G) \le \NumOfMaxCl(G)$.  Thus
$\reg(S/\bei_{G}) \leq \NumOfMaxCl(G)$ holds for cographs, but in fact it holds
for all graphs by~\cite{malayeri2021proof}.

Our last bound uses the maximum vertex degree $\delta(G)$ of a connected
cograph~$G$.
\begin{proposition}\label{proposition:cographs:regMaxdeg}
  Let $G$ be a connected cograph. Then
  \begin{equation*}
    \reg(S/\bei_{G}) \leq \delta(G) = \max\{\delta(v) : v \in V(G)\}.
  \end{equation*}
\end{proposition}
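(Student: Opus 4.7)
The plan is to combine the join decomposition of a connected cograph from Lemma~\ref{lemma:cographs:join} with the recursive regularity formula of Proposition~\ref{proposition:cographs:regConnected}, and then to extract a matching lower bound on $\delta(G)$ directly from the join structure itself.

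If $G=K_{n}$ is complete, then Theorem~\ref{theorem:matsudaMurai} immediately gives $\reg(S/\bei_{G})\le n-1=\delta(G)$. Otherwise, write $G=G_{1}\ast\dotsb\ast G_{m}$ as in Lemma~\ref{lemma:cographs:join}. Since $G$ is connected with $|V(G)|\ge 2$, necessarily $m\ge 2$: a lone $G_{1}$ would either be disconnected (forcing $G$ disconnected) or a single vertex (forcing $|V(G)|=1$). Proposition~\ref{proposition:cographs:regConnected} then yields
\[
\reg(S/\bei_{G})=\max\bigl(\{2\}\cup\{\reg(S/\bei_{G_{i}}):i\in[m]\}\bigr).
\]
If this maximum equals~$2$ we are done, because a connected non-complete graph has at least three vertices and hence $\delta(G)\ge 2$. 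Otherwise the maximum is attained at some disconnected $G_{i}=H_{1}\sqcup\dotsb\sqcup H_{k_{i}}$ with $k_{i}\ge 2$, and additivity of regularity under disjoint union together with Theorem~\ref{theorem:matsudaMurai} yields
\[
\reg(S/\bei_{G_{i}})=\sum_{j=1}^{k_{i}}\reg(S/\bei_{H_{j}})\le\sum_{j=1}^{k_{i}}\bigl(|V(H_{j})|-1\bigr)=|V(G_{i})|-k_{i}.
\]

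The key step is then to bound $\delta(G)$ from below: any vertex $u$ lying in some factor $G_{\ell}$ with $\ell\ne i$ (such a factor exists because $m\ge 2$) is joined to every vertex of $V(G_{i})$, so $\deg_{G}(u)\ge|V(G_{i})|$ and hence $\delta(G)\ge|V(G_{i})|>|V(G_{i})|-k_{i}\ge\reg(S/\bei_{G_{i}})=\reg(S/\bei_{G})$. The main obstacle is simply spotting this lower bound on~$\delta(G)$: the buffer $k_{i}\ge 2$ coming from the disconnectedness of $G_{i}$ is precisely what makes the (rather coarse) Matsuda--Murai estimate fit under $|V(G_{i})|$, so that the trivial observation $\delta(G)\ge|V(G_{i})|$ already suffices.
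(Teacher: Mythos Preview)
Your proof is correct and follows essentially the same route as the paper: decompose $G$ as a join via Lemma~\ref{lemma:cographs:join}, apply Proposition~\ref{proposition:cographs:regConnected}, bound each $\reg(S/\bei_{G_i})$ via Theorem~\ref{theorem:matsudaMurai}, and observe that $\delta(G)\ge |V(G_i)|$ because every vertex outside $G_i$ is joined to all of~$V(G_i)$. The only cosmetic difference is that the paper bounds $\reg(S/\bei_{G_i})\le n_i-1$ directly and then notes $\max\{2,n_1-1,\dots,n_m-1\}\le n_{\max}\le\delta(G)$, whereas you split off the case $\max=2$ separately and sharpen to $|V(G_i)|-k_i$ using the disconnectedness of~$G_i$; this extra precision is harmless but not needed.
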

\begin{proof}
  If $G = K_{n}$, then $\reg(S/\bei_{G}) \le 1$, $\delta(G) = n-1$, and the
  inequality holds.  If $G$ is not complete but connected, it is the join of
  induced subgraphs $G_1,\dots,G_m$ as in Lemma~\ref{lemma:cographs:join}.  Then
  $\reg(S/\bei_{G}) = \max\{2, \reg(S/\bei_{G_{1}}), \dots, \reg(S/\bei_{G_{m}})\}$ by
  Proposition~\ref{proposition:cographs:regConnected}.  Writing $n_{i}$ for the
  number of vertices of $G_{i}$, Theorem~\ref{theorem:matsudaMurai} gives
  \begin{equation*}
    \reg(S/\bei_{G}) \leq \max\{2, n_1-1, \dots, n_m-1\}.
  \end{equation*}
  Let $n_{\text{max}} = \max\{n_1,\dots,n_m\}$.
  Then $\reg(S/\bei_{G}) \leq n_{\text{max}}$, since $G$ is not complete and
  thus $n_{\text{max}} \geq 2$.  Since $G$ is a join of the $G_{i}$, the maximum
  vertex degree satisfies $n_{\text{max}}\leq \max\{\delta(v) : v \in V(G)\}$
  and we conclude.
\end{proof}

\begin{remark}
  The bound in Proposition~\ref{proposition:cographs:regMaxdeg} does not give
  anything new for cone graphs since in this case it agrees with
  Theorem~\ref{theorem:matsudaMurai}.
\end{remark}

\begin{remark}\label{r:bestBound}
  One can ask if one of the bounds in this section is generally preferable over
  the other bounds.  Table~\ref{table:boundComparison} shows than any bound can
  beat any other bound, with the exception of
  $\SizeOfMaxStabS(G) \le \NumOfMaxCl(G)$.  On the other hand, if one asks for
  the best bound, it can be confirmed that among the $2341$ cographs in our
  database, for 505 $\NumOfMaxStabS(G)$ is strictly the best bound and for 724
  $\SizeOfMaxStabS(G)$ is strictly the best bound.  No other bound is ever
  strictly the best and for all remaining graphs there is a tie for the best
  bound.
\end{remark}

\begin{table}[htp]
  \centering
  \begin{tabular}{r | r r r r r}
    & order bound & $\NumOfMaxCl(G)$ & $\NumOfMaxStabS(G)$ & $\SizeOfMaxStabS(G)$ & max deg \\
    \hline
    order bound & $0$ & $968$ & $968$ & $146$ & $1090$ \\
    $\NumOfMaxCl(G)$ & $918$ & $0$ & $1049$ & $0$ & $724$ \\
    $\NumOfMaxStabS(G)$ & $920$ & $1050$ & $0$ & $514$ & $837$ \\
    $\SizeOfMaxStabS(G)$ & $1830$ & $1139$ & $1522$ & $0$ & $1150$ \\
    max deg & $5$ & $362$ & $201$ & $1$ & $0$
  \end{tabular}
  \caption{ \label{table:boundComparison} Comparisons of five regularity bounds
  for all $2341$ cographs in our database.  The numbers given are the numbers of
  cographs for which the bound in the left-most column is strictly better than
  the bound in the top row respectively. `Order bound' stands for the bound in
  Theorem~\ref{theorem:cographs:maxReg}, `max deg' denotes the maximum vertex
  degree in Proposition~\ref{proposition:cographs:regMaxdeg}. Comparisons with
  `max deg' are made only for the $1171$ connected cographs.}
\end{table}

\begin{remark}
  The questions about regularity in this paper can also be asked about the
  regularity of $S/\mathcal{I}_{G}$ where $\mathcal{I}_{G}$ is the \emph{parity
  binomial edge ideal} of~\cite{kahle2015parity}.  Using our database we
  observed the following inequality, slightly weaker than
  Theorem~\ref{theorem:matsudaMurai},
  \[
    \ell \le \reg(S/\mathcal{I}_{G}) \le n.
  \]
  Based on our computations we conjecture that the maximum regularity is
  achieved exactly for disjoint unions of odd cycles.  Minimal free resolutions
  of parity binomial edge ideals contain many interesting patterns that remain
  to be investigated.  At the time of the first posting of this paper,
  explaining even the minimal free resolution of $S/\mathcal{I}_{K_{n}}$ was
  open and we conjectured that $\reg(S/\mathcal{I}_{K_{n}}) = 3$.  In the
  meantime this has been confirmed in~\cite{hoang20:_hilber}.
\end{remark}

\section{Regularity versus \texorpdfstring{$h$}{h}-polynomials}\label{s:HibiMatsuda}

As a standard graded $\kk$-algebra, the Hilbert series of $S/\bei_{G}$ takes the
form $\frac{h_{G}(t)}{(1-t)^{d}}$ where $d$ is the Krull dimension and
$h_{G}(t) \in \mathbb{Z}[t]$.  The numerator $h_{G}$ is known as the
\emph{h-polynomial}.  In the first arXiv version of \cite{HibiMatsuda2018}, Hibi
and Matsuda conjectured that for binomial edge ideals its degree bounds the
regularity from above.  The conjecture was removed from a subsequent version of
their paper after we informed them of the following minimal counterexample on
eight vertices:

\begin{example}
  Let $G$ be the graph in Figure~\ref{figure:regHPolDeg:counterexample}, that is
  the graph on the vertex set $\{1,\dots, 8\}$ with edges $\{1,8\}$, $\{2,6\}$,
  $\{3,7\}$, $\{3,8\}$, $\{4,5\}$, $\{4,8\}$, $\{5,6\}$, $\{5,7\}$, $\{6,7\}$,
  $\{6,8\}$, $\{7,8\}$.  Then $\reg(S/\bei_{G}) = 4$ and $\deg(h_{G}) = 3$.
\end{example}

\begin{figure}[ht]
	\centering
	\begin{tikzpicture}
	[
	scale=0.02,mynode/.style={draw,fill=white,circle,outer sep=4pt,inner sep=2pt},myedge/.style={line width=1.5,black}
	]
	
	\coordinate(p1) at(100,0);
	\coordinate(p2) at(70,70);
	\coordinate(p3) at(0,100);
	\coordinate(p4) at(-70,70);
	\coordinate(p5) at(-100,0);
	\coordinate(p6) at(-70,-70);
	\coordinate(p7) at(0,-100);
	\coordinate(p8) at(70,-70);
	
	\draw[myedge] (p1) -- (p8);
	\draw[myedge] (p2) -- (p6);
	\draw[myedge] (p3) -- (p7);
	\draw[myedge] (p3) -- (p8);
	\draw[myedge] (p4) -- (p5);
	\draw[myedge] (p4) -- (p8);
	\draw[myedge] (p5) -- (p6);
	\draw[myedge] (p5) -- (p7);
	\draw[myedge] (p6) -- (p7);
	\draw[myedge] (p6) -- (p8);
	\draw[myedge] (p7) -- (p8);
	
	\node[mynode] at (p1) {1};
	\node[mynode] at (p2) {2};
	\node[mynode] at (p3) {3};
	\node[mynode] at (p4) {4};
	\node[mynode] at (p5) {5};
	\node[mynode] at (p6) {6};
	\node[mynode] at (p7) {7};
	\node[mynode] at (p8) {8};
	\end{tikzpicture}
	\caption{\label{figure:regHPolDeg:counterexample} A graph with
	$\reg (S/\bei_{G}) > \deg (h_{G})$.}
\end{figure}

At the time of writing, our database contains $39$ counterexamples and none
shows a difference greater than 1 between $\reg(S/\bei_{G})$ and $\deg(h_{G})$.
However, gluing two copies of the counterexample in
Figure~\ref{figure:regHPolDeg:counterexample} at vertex $1$ yields a graph $G$
(visible in Figure~\ref{figure:regHPolDeg:counterexample2}) which satisfies
$\reg(S/\bei_{G})=8$ and $\deg(h_{G}) = 6$.  We now show that the difference can
be made arbitrarily large.  To this end we employ the following two theorems
that explain the behaviour of the regularity and the Hilbert series upon gluing
two graphs $G_{1}$ and $G_{2}$ over a vertex which is a free vertex in both
graphs.  If $G$ is a gluing like this, then $G_{1}$ and $G_{2}$ are a
\emph{split} of~$G$.

\begin{theorem}[{\cite[Theorem~3.1]{JayanthanNarayananRao2016}}]\label{theorem:glue:reg}
  Let $G_1$ and $G_2$ be a split of a graph $G$ at a vertex~$v$. If $v$ is a
  free vertex in both $G_1$ and $G_2$, then
  $\reg(S/\bei_{G}) = \reg(S/\bei_{G_1}) + \reg(S/\bei_{G_2})$.
\end{theorem}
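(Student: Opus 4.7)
The plan is to prove the equality by establishing both inequalities separately, starting from the observation that since $G_1$ and $G_2$ share only the vertex $v$, their edge sets are disjoint and $\bei_G = \bei_{G_1} + \bei_{G_2}$ inside $S = \kk[x_i, y_i : i \in V(G)]$. The two summand ideals use disjoint variable sets except for $x_v$ and $y_v$, which live in the shared subring $T = \kk[x_v, y_v]$.

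For the upper bound $\reg(S/\bei_G) \le \reg(S/\bei_{G_1}) + \reg(S/\bei_{G_2})$ I would exploit the Mayer--Vietoris short exact sequence
\begin{equation*}
  0 \to S/(\bei_{G_1} \cap \bei_{G_2}) \to S/\bei_{G_1} \oplus S/\bei_{G_2} \to S/\bei_G \to 0,
\end{equation*}
so that by the long exact sequence of $\operatorname{Tor}$ it suffices to bound $\reg(S/(\bei_{G_1} \cap \bei_{G_2}))$ by $\reg(S/\bei_{G_1}) + \reg(S/\bei_{G_2}) + 1$. Here is where the free vertex hypothesis enters: I would aim to show $\bei_{G_1} \cap \bei_{G_2} = \bei_{G_1} \cdot \bei_{G_2}$ by comparing the primary decompositions of the two sides, using that freeness at $v$ rules out extra primary components coming from subsets $T$ containing $v$, and then control the regularity of the product by standard estimates for regularity of products of ideals in disjoint (or nearly disjoint) variables.

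For the lower bound I would use that freeness of $v$ forces $\{x_v, y_v\}$ to be a regular sequence on each $S_i/\bei_{G_i}$, where $S_i$ is the polynomial ring on the variables of $G_i$, so that $S/\bei_G$ identifies with the fibered tensor product $(S_1/\bei_{G_1}) \otimes_T (S_2/\bei_{G_2})$. A K\"unneth-style argument then yields an extremal graded $\operatorname{Tor}$ class of $S/\bei_G$ in degree $\reg(S/\bei_{G_1}) + \reg(S/\bei_{G_2})$ from extremal classes in the two factors. The main obstacle is algebraically harnessing the free vertex hypothesis in both directions simultaneously; without it, additional primary components in the Herzog et al.\ decomposition of $\bei_G$ would spoil both the intersection identity required for the upper bound and the flatness over $T$ required for the K\"unneth step.
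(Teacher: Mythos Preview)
The paper does not supply its own proof of this theorem: it is quoted verbatim from \cite[Theorem~3.1]{JayanthanNarayananRao2016} and used as a black box in the proof of Theorem~\ref{thm:differenceLarge}. So there is no in-paper argument to compare your proposal against.

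That said, your strategy is on the right track but organized inefficiently. The decisive observation---which you state for the lower bound---is that freeness of $v$ makes $x_v,y_v$ a regular sequence on each $S_i/\bei_{G_i}$, and hence (since $T=\kk[x_v,y_v]$ is a polynomial ring in two variables and the Koszul complex computes $\operatorname{Tor}^T(-,\kk)$) each $S_i/\bei_{G_i}$ is \emph{free} as a graded $T$-module. Once you have that, you do not need to split into two inequalities: if $F_\bullet$ and $G_\bullet$ are minimal free resolutions of $S_1/\bei_{G_1}$ and $S_2/\bei_{G_2}$ over $S_1$ and $S_2$, the total complex $F_\bullet\otimes_T G_\bullet$ is a complex of free $S=S_1\otimes_T S_2$-modules whose acyclicity follows from freeness of $S_2/\bei_{G_2}$ over~$T$, and whose minimality is immediate because both differentials already land in the maximal ideals. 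Reading off graded Betti numbers from this tensor product gives the exact equality $\reg(S/\bei_G)=\reg(S/\bei_{G_1})+\reg(S/\bei_{G_2})$ in one stroke.

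Your Mayer--Vietoris route for the upper bound is not wrong, but the step ``control the regularity of the product by standard estimates for \dots\ nearly disjoint variables'' is not standard; making it precise would again force you through the $T$-freeness argument above, so you may as well use it directly. Likewise, the ``compare primary decompositions'' justification for $\bei_{G_1}\cap\bei_{G_2}=\bei_{G_1}\bei_{G_2}$ is vague; the clean reason is $\operatorname{Tor}_1^S(S/\bei_{G_1},S/\bei_{G_2})=0$, which again drops out of $T$-freeness.
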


\begin{theorem}[{\cite[Theorem~3.2]{KumarSarkar2018}}]\label{theorem:glue:hilbert}
  Let $G_1$ and $G_2$ be the decomposition of a graph~$G$ at a vertex~$v$. If
  $v$ is a free vertex in both $G_1$ and $G_2$, then
  \begin{equation*}
    \Hilbert_{S/\bei_G}(t) = (1-t)^2 \Hilbert_{S/\bei_{G_1}}(t) \Hilbert_{S/\bei_{G_2}}(t).
  \end{equation*}
\end{theorem}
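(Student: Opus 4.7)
The plan is to identify $S/\bei_G$ with the tensor product of the smaller quotients over the coordinate ring of the shared vertex, and then reduce the Hilbert series identity to a Tor-vanishing. Let $S_i$ be the polynomial subring generated by the variables at $V(G_i)$, set $R_i = S_i/\bei_{G_i}$, and let $A = \kk[x_v, y_v]$ be the polynomial ring on the coordinates of the shared vertex~$v$. Since $V(G_1)\cap V(G_2) = \{v\}$ one has $S_1 \otimes_A S_2 = S$, and since $\bei_G = \bei_{G_1} + \bei_{G_2}$ with $\bei_{G_i} \subseteq S_i$, this immediately yields
\[
S/\bei_G \;\cong\; R_1 \otimes_A R_2.
\]

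With this description the theorem becomes the statement $\Hilbert_{R_1 \otimes_A R_2}(t) = \Hilbert_{R_1}(t)\,\Hilbert_{R_2}(t)/\Hilbert_A(t)$, since $\Hilbert_A(t) = 1/(1-t)^2$. Taking a finite graded free $A$-resolution of $R_1$ and invoking the Euler--Poincar\'e identity yields in general
\[
\sum_{i \ge 0} (-1)^i\,\Hilbert_{\mathrm{Tor}_i^A(R_1,R_2)}(t)
\;=\; \frac{\Hilbert_{R_1}(t)\,\Hilbert_{R_2}(t)}{\Hilbert_A(t)},
\]
so everything will reduce to establishing the vanishing $\mathrm{Tor}_i^A(R_1,R_2) = 0$ for all $i \ge 1$, which then collapses the left-hand side to $\Hilbert_{R_1 \otimes_A R_2}(t) = \Hilbert_{S/\bei_G}(t)$.

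The hard part will be this Tor-vanishing, and the hypothesis that $v$ is a free vertex in both $G_1$ and $G_2$ is what makes it plausible. My plan is to build an explicit graded free $A$-resolution $F_\bullet \to R_1$ whose differentials are given by polynomials in the variables of $V(G_1)\setminus\{v\}$, so that after tensoring with $R_2$ the complex $F_\bullet \otimes_A R_2$ has differentials that are multiplications by polynomials in variables disjoint from those of $R_2$. Freeness of $v$ in $G_1$ controls the shape of this resolution near the $v$-column, because $N_{G_1}[v]$ being a clique forces $\bei_{G_1}$ to contain every $2\times 2$ minor of the submatrix of $X$ indexed by $N_{G_1}[v]$, putting the local picture at $v$ under the umbrella of the classical Eagon--Northcott resolution of the ideal of maximal minors of a generic $2 \times n$ matrix. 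The symmetric hypothesis on $G_2$ is then invoked to guarantee that these polynomials remain non-zerodivisors on $R_2$, so the tensored complex stays exact in positive degrees. Making these compatibilities precise, probably via an induction on the number of edges of $G_1$ and $G_2$ incident to $v$ with a base case in which $v$ is attached only to its closed neighborhood clique, is the technical crux of the argument.
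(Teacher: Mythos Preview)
The paper does not prove this statement: it is quoted verbatim from \cite[Theorem~3.2]{KumarSarkar2018} and used as a black box in the proof of Theorem~\ref{thm:differenceLarge}. There is therefore no ``paper's own proof'' to compare your attempt against.

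As for your attempt itself, the first two steps are sound: the identification $S/\bei_G \cong R_1 \otimes_A R_2$ with $A=\kk[x_v,y_v]$ is correct, and the reduction of the Hilbert series identity to $\mathrm{Tor}_i^A(R_1,R_2)=0$ for $i\ge 1$ via the Euler--Poincar\'e principle is a clean formulation. The trouble is your plan for proving the Tor-vanishing. You propose to build a graded free $A$-resolution $F_\bullet \to R_1$ whose differentials are ``polynomials in the variables of $V(G_1)\setminus\{v\}$''. But the differentials of an $A$-free resolution are matrices with entries in $A=\kk[x_v,y_v]$; the variables attached to $V(G_1)\setminus\{v\}$ simply do not live in $A$, so this description is incoherent as stated. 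The subsequent appeal to Eagon--Northcott and to an induction on edges incident to $v$ is too vague to repair this, and it is not clear what precise statement the free-vertex hypothesis is meant to furnish.

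A more promising line, if you want to carry this through, is to exploit that $A$ has global dimension~$2$: Tor-vanishing over $A$ is controlled by the behaviour of the sequence $x_v,y_v$ on $R_1$ and $R_2$, and the free-vertex hypothesis is exactly what one expects to translate into a regularity or depth statement for that sequence. Alternatively, the original argument in Kumar--Sarkar proceeds via short exact sequences relating $\bei_G$, $\bei_{G_1}$, $\bei_{G_2}$ and their intersection, which avoids the Tor machinery entirely; consulting that source would be the most direct route.
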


\begin{theorem}\label{thm:differenceLarge}
  Let $k \in \N$. Then there exists a graph $G$ such that
  \begin{equation*}
    \reg(S/\bei_{G}) = \deg(h_{G}) + k.
  \end{equation*}
\end{theorem}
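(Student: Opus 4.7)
The plan is to amplify the $8$-vertex counterexample $G_0$ of Figure~\ref{figure:regHPolDeg:counterexample} by iterated gluing at free vertices, using Theorems~\ref{theorem:glue:reg} and~\ref{theorem:glue:hilbert} in tandem. For $G_0$ the example records $\reg(S/\bei_{G_0}) = 4$ and $\deg(h_{G_0}) = 3$, and I would first check that $G_0$ has at least three free (simplicial) vertices: vertices~$1$ and~$2$ are leaves, and vertex~$3$ has neighborhood $\{7,8\}$, which is an edge of $G_0$. This surplus of free vertices is what makes an unbounded iteration possible.

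I would then define $G^{(1)} := G_0$ and, inductively, $G^{(m+1)} := G^{(m)} \cup_v G_0$, where $v$ identifies a surviving free vertex of $G^{(m)}$ with a free vertex of a fresh copy of $G_0$. Each gluing consumes only one of the three free vertices of the newly adjoined copy, so the other two persist as free vertices of $G^{(m+1)}$ and can host subsequent gluings. Iterating Theorem~\ref{theorem:glue:reg} gives $\reg(S/\bei_{G^{(m)}}) = 4m$. On the $h$-polynomial side, Theorem~\ref{theorem:glue:hilbert} yields
\[
  \Hilbert_{S/\bei_{G^{(m+1)}}}(t) = (1-t)^2\,\Hilbert_{S/\bei_{G^{(m)}}}(t)\,\Hilbert_{S/\bei_{G_0}}(t),
\]
so the extra $(1-t)^2$ is absorbed into the denominator, leading to $h_{G^{(m)}}(t) = h_{G_0}(t)^m$ and $\deg h_{G^{(m)}} = 3m$. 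Combining, $\reg(S/\bei_{G^{(m)}}) - \deg h_{G^{(m)}} = m$, so taking $m = k$ proves the theorem (with $k=0$ handled trivially, e.g.\ by a single vertex).

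The main technical point I would need to confirm is that $h_{G_1 \cup_v G_2} = h_{G_1}\,h_{G_2}$ holds as an identity of polynomials, i.e.\ that $(1-t)$ does not divide either numerator and cancel against the $(1-t)^2$ factor. This amounts to $h_{G_0}(1) \neq 0$, which holds because $h_{G_0}(1)$ equals the multiplicity of $S/\bei_{G_0}$, a positive integer. Beyond that, the proof is bookkeeping: tracking free vertices through the iteration so that Theorems~\ref{theorem:glue:reg} and~\ref{theorem:glue:hilbert} can legitimately be applied at each step.
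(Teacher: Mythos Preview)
Your proposal is correct and follows essentially the same strategy as the paper: iteratively glue $k$ copies of the $8$-vertex counterexample at free vertices and invoke Theorems~\ref{theorem:glue:reg} and~\ref{theorem:glue:hilbert} to obtain $\reg = 4k$ and $\deg h = 3k$. The paper uses only the two leaf vertices $1,2$ as free vertices (which already suffice for a chain), and it sidesteps your non-cancellation check by writing down the explicit numerator $1+7t+17t^2+13t^3$; your argument via $h_{G_0}(1)>0$ is a clean alternative.
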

\begin{proof}
  Let $G_1$ be the graph in Figure~\ref{figure:regHPolDeg:counterexample}.  The
  reduced Hilbert series of $S/\bei_{G_1}$ can be computed with
  \textsc{Macaulay2}~\cite{Macaulay2} as
  \begin{equation*}
    \Hilbert_{S/\bei_{G_1}}(t) = \frac{1+7t+17t^2+13t^3}{(1-t)^9}
  \end{equation*}
  and its regularity as $\reg(S/\bei_{G_1}) = 4$.  Since $G_{1}$ has two free
  vertices $1,2$, we can glue a chain of $k$ copies of $G_{1}$ along free
  vertices (see Figure~\ref{figure:regHPolDeg:counterexample2} for the case
  $k=2$ in which the vertices $2$ and $9$ are available for further gluing).  By
  Theorem~\ref{theorem:glue:hilbert}, the Hilbert series of the resulting graph
  is
  \begin{equation*}
    \Hilbert_{S/\bei_{G_2}}(t) = \dfrac{(1+7t+17t^2+13t^3)^k}{(1-t)^{7k+2}}
  \end{equation*}
  and, by Theorem~\ref{theorem:glue:reg}, $\reg(S/\bei_{G_2}) = 4k$.  Thus
  $\reg(S/\bei_{G}) - \deg(h_{G}) = k$.
\end{proof}

\begin{figure}[ht]
\centering
\begin{tikzpicture}
[
scale=0.02,mynode/.style={draw,fill=white,circle,outer sep=4pt,inner sep=0pt, minimum size=14pt},myedge/.style={line width=1.5,black}
]

\coordinate(p1) at(100,0);
\coordinate(p2) at(70,70);
\coordinate(p3) at(0,100);
\coordinate(p4) at(-70,70);
\coordinate(p5) at(-100,0);
\coordinate(p6) at(-70,-70);
\coordinate(p7) at(0,-100);
\coordinate(p8) at(70,-70);
\coordinate(p9) at(130,70);
\coordinate(p10) at(200,100);
\coordinate(p11) at(270,70);
\coordinate(p12) at(300,0);
\coordinate(p13) at(270,-70);
\coordinate(p14) at(200,-100);
\coordinate(p15) at(130,-70);

\draw[myedge] (p1) -- (p8);
\draw[myedge] (p2) -- (p6);
\draw[myedge] (p3) -- (p7);
\draw[myedge] (p3) -- (p8);
\draw[myedge] (p4) -- (p5);
\draw[myedge] (p4) -- (p8);
\draw[myedge] (p5) -- (p6);
\draw[myedge] (p5) -- (p7);
\draw[myedge] (p6) -- (p7);
\draw[myedge] (p6) -- (p8);
\draw[myedge] (p7) -- (p8);
\draw[myedge] (p1) -- (p15);
\draw[myedge] (p9) -- (p13);
\draw[myedge] (p10) -- (p14);
\draw[myedge] (p10) -- (p15);
\draw[myedge] (p11) -- (p12);
\draw[myedge] (p11) -- (p15);
\draw[myedge] (p12) -- (p13);
\draw[myedge] (p12) -- (p14);
\draw[myedge] (p13) -- (p14);
\draw[myedge] (p13) -- (p15);
\draw[myedge] (p14) -- (p15);

\node[mynode] at (p1) {1};
\node[mynode] at (p2) {2};
\node[mynode] at (p3) {3};
\node[mynode] at (p4) {4};
\node[mynode] at (p5) {5};
\node[mynode] at (p6) {6};
\node[mynode] at (p7) {7};
\node[mynode] at (p8) {8};
\node[mynode] at (p9) {9};
\node[mynode] at (p10) {10};
\node[mynode] at (p11) {11};
\node[mynode] at (p12) {12};
\node[mynode] at (p13) {13};
\node[mynode] at (p14) {14};
\node[mynode] at (p15) {15};
\end{tikzpicture}
\caption{\label{figure:regHPolDeg:counterexample2} A graph with
$\reg (S/\bei_{G}) > \deg (h_{G}) + 1$.}
\end{figure}

\bibliographystyle{amsplain}
\bibliography{bibliography}
\end{document}